\newcommand{\eps}{\varepsilon}
\newtheorem{lemma}{Lemma}
\newtheorem{proposition}{Proposition}
\newtheorem{theorem}{Theorem}
\begin{document}
\title{Some Hilbert spaces related with the Dirichlet space}

\author{Nicola Arcozzi} \address{Universit\`a di Bologna, Dipartimento di Matematica, Piazza di Porta S.Donato 5, Bologna} \email{nicola.arcozzi@unibo.it}

\author{Pavel Mozolyako} \address{Chebyshev Lab at St. Petersburg State University, 14th Line 29B, Vasilyevsky Island, St.Petersburg 199178, RUSSIA} \email{pmzlcroak@gmail.com} 

\author{Karl-Mikael Perfekt} \address{Department of Mathematical Sciences, Norwegian University of Science and Technology (NTNU), NO-7491 Trondheim, Norway} \email{karl-mikael.perfekt@math.ntnu.no}

\author{Stefan Richter} \address{Department of Mathematics, The University of Tennessee, Knoxville, TN 37996, USA} \email{richter@math.utk.edu}

\author{Giulia Sarfatti} \address{Istituto Nazionale di Alta Matematica ``F. Severi'', Citt\`a Universitaria, Piazzale Aldo Moro 5, 00185 Roma \newline {\tiny and}  \newline \indent Institut de Math\'ematiques de Jussieu,
Universit\'e Pierre et Marie Curie, 4, place Jussieu, F-75252 Paris, France} \email{giulia.sarfatti@imj-prg.fr}

\date{\today}

{\maketitle}

\begin{abstract}
We study the reproducing kernel Hilbert space with kernel $k^d$, where $d$ is a positive integer and $k$ is the reproducing kernel of the analytic Dirichlet space.
\end{abstract}

\section*{Introduction}
Consider the Dirichlet space $\mathcal{D}$ on the unit disc $\{z\in{\mathbb C}:\ |z|<1\}$ of the complex plane.
It can be defined as the Reproducing Kernel Hilbert Space (RKHS) having kernel
$$
k_z(w)=k(w,z)=\frac{1}{\overline{z}w}\log\frac{1}{1-\overline{z}w}=\sum_{n=0}^\infty\frac{(\overline{z}w)^n}{n+1}.
$$
We are interested in the spaces $\mathcal{D}_d$ having kernel $k^d$, with $d \in \mathbb{N}$. ${\mathcal{D}_d}$ can be thought of in terms of function spaces on polydiscs, following ideas of Aronszajn \cite{Aronszajn}. To explain this point of view, note that the tensor $d$-power ${\mathcal{D}}^{\otimes d}$ of the Dirichlet space has reproducing kernel
$k_d(z_1,\cdots,z_d;w_1,\dots,w_d)=\Pi_{j=1}^dk(z_j,w_j)$. Hence, the space of restrictions of functions in ${\mathcal{D}}^{\otimes d}$ to the diagonal $z_1=\dots=z_d$ has the reproducing kernel $k^d$, and therefore coincides with $\mathcal{D}_d$.

We will provide several equivalent norms for the spaces $\mathcal{D}_d$ and their dual spaces in Theorem \ref{norms}. Then we will discuss the properties of these spaces. More precisely, we will investigate:
\begin{itemize}
 \item[-] $\mathcal{D}_d$ and its dual space $HS_d$ in connection with Hankel operators of Hilbert-Schmidt class on the Dirichlet space $\mathcal{D}$; 
 \item[-] the complete Nevanlinna-Pick property for $\mathcal{D}_d$;
 \item[-] the Carleson measures for these spaces.
\end{itemize}
Concerning the first item, the connection with Hilbert-Schmidt Hankel operators served as our original motivation for studying the spaces $\mathcal{D}_d$.

Note that the spaces $\mathcal{D}_d$ live infinitely close to ${\mathcal{D}}$ in the scale of weighted Dirichlet spaces $\tilde{\mathcal{D}}_s$, defined by the norms
$$
\|\varphi\|_{\tilde{{\mathcal{D}}}_s}^2=\int_{-\pi}^{+\pi}\left|\varphi(e^{it})\right|^2\frac{dt}{2\pi}+\int_{|z|<1}\left|\varphi^\prime(z)\right|^2(1-|z|^2)^s\frac{dA(z)}{\pi},\ 0\le s<1,
$$
where $\frac{dA(z)}{\pi}$ is normalized area measure on the unit disc.

\textbf{Notation:} We use multiindex notation. If $n=(n_1,\dots,n_d)$ belongs to ${\mathbb N}^d$, then $|n|=n_1+\dots+n_d$.
We write $A\approx B$ if $A$ and $B$ are 
quantities that depend on a certain family of variables, and there exist independent constants $0<c<C$ such that $c A\le B\le C A$. 

\section*{Equivalent norms for the spaces $\mathcal{D}_d$ and their dual spaces $HS_d$}

\begin{theorem}\label{norms}
 Let $d$ be a positive integer and let 
$$
a_d(k)=\sum_{|n|=k}\frac{1}{(n_1+1)\dots(n_d+1)}.
$$
Then the norm of a function $\varphi(z)=\sum_{k=0}^\infty\widehat{\varphi}(k)z^k$ in $\mathcal{D}_d$ is
\begin{equation}\label{normone}
\|\varphi\|_{\mathcal{D}_d}=\left(\sum_{k=0}^\infty a_d(k)^{-1}\left|\widehat{\varphi}(k)\right|^2\right)^{1/2}\approx[\varphi]_{d},
\end{equation}
where
\begin{equation}\label{normtwo}
[\varphi]_{d}=\left(\sum_{k=0}^\infty \frac{k+1}{\log^{d-1}(k+2)}\left|\widehat{\varphi}(k)\right|^2\right)^{1/2}. 
\end{equation}
An equivalent Hilbert norm $|[\varphi]|_d\approx[\varphi]_d$ for $\varphi$ in terms of the values of $\varphi$ is given by
\begin{equation}\label{normthree}
|[\varphi]|_d= |\varphi(0)|^2+\left(\int_{{\mathbb D}} |\varphi^\prime(z)|^2\frac{1}{\log^{d-1}\left(\frac{1}{1-|z|^2}\right)}\frac{dA(z)}{\pi}\right)^{1/2}.
\end{equation}
Define now the holomorphic space $HS_d$ by the norm:
\begin{equation}\label{HSone}
\|\psi\|_{HS_d}=\left(\sum_{k=0}^\infty(k+1)^2a_d(k)\left|\widehat{\psi}(k)\right|^2\right)^{1/2}. 
\end{equation}
Then, $HS_d\equiv \left(\mathcal{D}_d\right)^*$ is the dual space of ${\mathcal{D}_d}$ under the duality pairing of ${\mathcal D}$. Moreover,
\begin{eqnarray}\label{HStwo}
\|\psi\|_{HS_d}\approx[\psi]_{HS_d}&:=&\left(\sum_{k=0}^\infty(k+1)\log^{d-1}(k+2)\left|\widehat{\psi}(k)\right|^2\right)^{1/2}\approx\crcr
|[\psi]|_{HS_d}&:=&\left(|\psi(0)|^2+\int_{{\mathbb D}} |\psi^\prime(z)|^2\log^{d-1}\left(\frac{1}{1-|z|^2}\right)\frac{dA(z)}{\pi}\right)^{1/2}.
\end{eqnarray}
Furthermore, the norm can be written as
\begin{equation}\label{b}
 \|\psi\|_{HS_d}^2=\sum_{(n_1, \ldots, n_d)} |\langle e_{n_1}\ldots e_{n_d}  , \psi \rangle_{\mathcal D}|^2,
\end{equation}
where $\{e_n\}_{n=0}^\infty$ is the canonical orthonormal basis of $\mathcal{D}$, $e_n(z) = \frac{z^n}{\sqrt{n+1}}$.
\end{theorem}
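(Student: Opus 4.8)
The plan is to reduce all six assertions to a single coefficient estimate. By construction $\mathcal{D}_d$ is the RKHS of $k^d$, and since
\[ k(w,z)^d=\Big(\sum_{n\ge0}\frac{(\overline z w)^n}{n+1}\Big)^d=\sum_{k\ge0}a_d(k)\,(\overline z w)^k \]
is a power series in $\overline z w$ with positive coefficients, the monomials $z^k$ are orthogonal in $\mathcal{D}_d$ with $\|z^k\|_{\mathcal{D}_d}^2=a_d(k)^{-1}$, which is exactly (\ref{normone}). All remaining norm equivalences then follow from
\[ a_d(k)\approx\frac{\log^{d-1}(k+2)}{k+1},\qquad k\ge0, \]
which turns the weights $a_d(k)^{-1}$ of (\ref{normone}) into the $[\varphi]_d$ weights of (\ref{normtwo}) and the weights $(k+1)^2a_d(k)$ of (\ref{HSone}) into the $[\psi]_{HS_d}$ weights of (\ref{HStwo}).

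I expect this estimate to be the main obstacle. After the substitution $m_i=n_i+1$ one has $a_d(k)=\sum_{m_1+\cdots+m_d=k+d,\ m_i\ge1}(m_1\cdots m_d)^{-1}$, and I would argue by induction on $d$ through the convolution $a_d(k)=\sum_{j=0}^k(j+1)^{-1}a_{d-1}(k-j)$. Inserting $a_{d-1}(k-j)\approx\log^{d-2}(k-j+2)/(k-j+1)$, the bulk of the sum comes from the two ends ($j$ small, and $k-j$ small), each contributing a harmonic factor $\sum_j(j+1)^{-1}\approx\log(k+2)$ against the peak value $\log^{d-2}(k+2)/(k+1)$, giving $\log^{d-1}(k+2)/(k+1)$, while the central range is of lower order; the delicate point is a clean two-sided bookkeeping of this convolution. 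Alternatively, as $a_d(k)$ is the $k$-th Taylor coefficient of $(-x^{-1}\log(1-x))^d$, whose only singularity on $\overline{\mathbb D}$ is the logarithmic one at $x=1$ behaving like $(-\log(1-x))^d$, singularity analysis (Flajolet--Odlyzko transfer) yields the same asymptotic directly.

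For the integral expressions (\ref{normthree}) and the second line of (\ref{HStwo}) I would pass to polar coordinates, which diagonalizes the Dirichlet integral: with $t=|z|^2$ and a radial weight $w$,
\[ \int_{\mathbb D}|\varphi'(z)|^2\,w\,\frac{dA(z)}{\pi}=\sum_{k\ge1}k^2\,|\widehat\varphi(k)|^2\int_0^1 t^{k-1}w(t)\,dt. \]
Taking $w(t)=\log^{\mp(d-1)}\!\big(1/(1-t)\big)$, it remains to show the Laplace-type asymptotic $\int_0^1 t^{k-1}\log^{\mp(d-1)}(1/(1-t))\,dt\approx k^{-1}\log^{\mp(d-1)}(k+2)$: the mass of $t^{k-1}$ concentrates where $1-t\sim1/k$, so there $\log(1/(1-t))\sim\log k$, and splitting $[0,1]$ at $1-t=1/k$ gives matching upper and lower bounds. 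Multiplying by $k^2$ recovers the weights $k\log^{\mp(d-1)}(k+2)$, so $|[\varphi]|_d\approx[\varphi]_d$ and $|[\psi]|_{HS_d}\approx[\psi]_{HS_d}$. (For $d\ge2$ the reciprocal weight must be read in the harmless regularized form $\log^{-(d-1)}(e/(1-|z|^2))$ so that the low-frequency terms, and the behaviour at the origin, remain finite; this does not affect the equivalence class.)

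Finally, for duality I would use the $\mathcal{D}$-pairing $\langle\varphi,\psi\rangle_{\mathcal D}=\sum_k(k+1)\widehat\varphi(k)\overline{\widehat\psi(k)}$. Writing $\widehat\varphi(k)=a_d(k)^{1/2}c_k$ makes $\|\varphi\|_{\mathcal{D}_d}=\|(c_k)\|_{\ell^2}$, and Cauchy--Schwarz identifies the dual norm of $\psi$ with $(\sum_k(k+1)^2a_d(k)|\widehat\psi(k)|^2)^{1/2}=\|\psi\|_{HS_d}$, proving $HS_d\equiv(\mathcal{D}_d)^*$ and (\ref{HSone}). For (\ref{b}), since $e_{n_1}\cdots e_{n_d}(z)=z^{|n|}\big(\prod_i(n_i+1)\big)^{-1/2}$ and $\langle z^k,\psi\rangle_{\mathcal D}=(k+1)\overline{\widehat\psi(k)}$, one gets
\[ \big|\langle e_{n_1}\cdots e_{n_d},\psi\rangle_{\mathcal D}\big|^2=\frac{(|n|+1)^2\,|\widehat\psi(|n|)|^2}{\prod_i(n_i+1)}; \]
summing over all multiindices and grouping by $|n|=k$ produces the factor $\sum_{|n|=k}\prod_i(n_i+1)^{-1}=a_d(k)$, whence the total equals $\sum_k(k+1)^2a_d(k)|\widehat\psi(k)|^2=\|\psi\|_{HS_d}^2$, which is (\ref{b}).
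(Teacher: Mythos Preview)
Your outline is correct and matches the paper's argument on most points: the expansion of $k^d$ giving \eqref{normone}, the inductive convolution estimate $a_d(k)\approx\log^{d-1}(k+2)/(k+1)$, the weighted-$\ell^2$ duality yielding $HS_d=(\mathcal D_d)^*$, and the computation of \eqref{b} are all handled exactly as in the paper.

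The genuine divergence is in the treatment of the integral norm \eqref{normthree} for $\mathcal D_d$. The paper does \emph{not} compute the moments $\int_0^1 t^{k-1}\log^{-(d-1)}\!\big(\tfrac{1}{1-t}\big)\,dt$ directly. Instead it argues by duality: it shows that $W_{d-1}(z)=\big(|z|^{-2}\log\tfrac{1}{1-|z|^2}\big)^{d-1}$ satisfies the Bekoll\'e--Bonami condition~(B2), invokes Luecking's theorem to obtain the Bergman-pairing duality $L^2_a(W_{d-1}\,dA)^*\cong L^2_a(W_{d-1}^{-1}\,dA)$, and reads off the coefficient weights from the $W_{d-1}$ side (where the moments are those of Lemma~\ref{lemma2}). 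This sidesteps the reciprocal-weight moment estimate entirely, at the cost of importing external machinery. Your direct Laplace-type approach is more elementary and self-contained, but note that the single split at $1-t=1/k$ you describe does not quite close the upper bound for the \emph{negative} power: on $[0,1-1/k]$ one only gets the crude bound $\int_0^{1-1/k}t^{k-1}\,dt\approx e^{-1}/k$, which is too large by a factor $\log^{d-1}k$. A finer cut (say an additional split at $1-t=k^{-1/2}$, or a dyadic decomposition of $[0,1-1/k]$) is needed so that on each piece either $t^{k-1}$ is exponentially small or $\log^{-(d-1)}(e/(1-t))$ is already $\lesssim\log^{-(d-1)}k$. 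With that refinement your argument goes through. Incidentally, your regularisation $\log^{-(d-1)}(e/(1-|z|^2))$ and the paper's implicit use of $W_{d-1}^{-1}$ (which is bounded at the origin since $t^{-1}\log\tfrac{1}{1-t}\to1$) are both ways of coping with the same defect of the stated weight at $z=0$.

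For the $HS_d$ integral norm the paper also takes a slightly different path from your Laplace heuristic: rather than localising $t^{k}$, it expands $\big(t^{-1}\log\tfrac{1}{1-t}\big)^{d-1}$ as a power series via Lemma~\ref{lemma1} and sums term by term (Lemma~\ref{lemma2}). Either method works here without difficulty.
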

The remainder of this section is devoted to the proof of Theorem \ref{norms}.
The expression for $\|\varphi\|_{\mathcal{D}_d}$ in (\ref{normone}) follows by expanding $(k_z)^d$ as a power series. 
The equivalence $\|\varphi\|_{\mathcal{D}_d}\approx[\varphi]_{d}$, as well as $\|\varphi\|_{HS_d}\approx[\varphi]_{HS_d}$, are consequences of the following lemma.
We denote by $c,C$ positive constants which are allowed to depend on $d$ only, whose precise value can change from line to line.
 \begin{lemma}\label{lemma1} For each $d\in \mathbb{N}$ there are constants $c,C>0$ such that for all $k\ge 0$ we have $$c a_d(k) \le\frac{\log^{d-1}(k+2)}{k+1} \le C a_d(k).$$ Consequently, if $t \in (0,1)$, then
$$c \left(\frac{1}{t} \log \frac{1}{1-t}\right)^d \le \sum_{k=0}^\infty \frac{\log^{d-1}(k+2)}{k+1} t^k \le C \left(\frac{1}{t} \log \frac{1}{1-t}\right)^d .$$
   \end{lemma}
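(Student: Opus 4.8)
The plan is to reduce everything to the first displayed inequality, namely the pointwise bound $a_d(k)\approx \log^{d-1}(k+2)/(k+1)$, and to prove that by induction on $d$. The starting observation is that $a_d(k)$ is exactly the $k$-th Taylor coefficient of $f(t)^d$, where $f(t)=\sum_{n\ge 0}t^n/(n+1)=\frac1t\log\frac1{1-t}$: the coefficient of $t^k$ in the product of $d$ copies of $f$ is precisely $\sum_{n_1+\cdots+n_d=k}\prod_j(n_j+1)^{-1}$. Peeling off the last index $n_d=j$ yields the convolution recursion
\[
a_{d+1}(k)=\sum_{j=0}^k\frac{a_d(k-j)}{j+1},
\]
which drives the induction. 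Once the pointwise bound is established, the ``consequently'' statement is immediate: multiplying $c\,a_d(k)\le \log^{d-1}(k+2)/(k+1)\le C\,a_d(k)$ by $t^k\ge 0$ and summing over $k$, the two outer sums collapse via the identity $\sum_k a_d(k)t^k=f(t)^d$ to $c\,f(t)^d$ and $C\,f(t)^d=C\bigl(\tfrac1t\log\tfrac1{1-t}\bigr)^d$.

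For the induction, the base case $d=1$ is the identity $a_1(k)=1/(k+1)=\log^0(k+2)/(k+1)$. Assuming $a_d(k)\approx \log^{d-1}(k+2)/(k+1)$ with constants uniform in $k$, I feed this into the recursion; since every term is positive the constants pass through the sum, so $a_{d+1}(k)$ is comparable to
\[
S_k:=\sum_{j=0}^k\frac{\log^{d-1}(k-j+2)}{(j+1)(k-j+1)}.
\]
It then suffices to show $S_k\approx \log^d(k+2)/(k+1)$.

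The heart of the argument, and the step I expect to require the most care, is this convolution estimate. I would use the partial-fraction identity $\frac{1}{(j+1)(k-j+1)}=\frac{1}{k+2}\bigl(\frac{1}{j+1}+\frac{1}{k-j+1}\bigr)$ to split $S_k$ into $\frac{1}{k+2}$ times the sum of $\sum_{j=0}^k\frac{\log^{d-1}(k-j+2)}{j+1}$ and $\sum_{j=0}^k\frac{\log^{d-1}(k-j+2)}{k-j+1}$. After reindexing, the second of these is $\sum_{i=0}^k\frac{\log^{d-1}(i+2)}{i+1}$, which is $\approx\log^d(k+2)$ by comparison with $\int_2^{k+2}\frac{\log^{d-1}x}{x}\,dx=\frac1d(\log^d(k+2)-\log^d 2)$. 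For the first sum, the upper bound follows from $\log^{d-1}(k-j+2)\le\log^{d-1}(k+2)$ together with $\sum_{j=0}^k(j+1)^{-1}\approx\log(k+2)$; for the matching lower bound I restrict to $0\le j\le\lfloor k/2\rfloor$, where $\log(k-j+2)\ge\log\frac{k+2}{2}\ge\tfrac12\log(k+2)$ for $k\ge 2$, and again use $\sum_{j=0}^{\lfloor k/2\rfloor}(j+1)^{-1}\approx\log(k+2)$. Hence both sums are $\approx\log^d(k+2)$, giving $S_k\approx\log^d(k+2)/(k+2)\approx\log^d(k+2)/(k+1)$ and completing the inductive step. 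The one remaining point is uniformity in $k$: the estimates above are asymptotic in nature, but the finitely many small values of $k$ (where each side is a fixed positive number) are absorbed by enlarging $C$ and shrinking $c$, so that the bounds hold for all $k\ge 0$ as stated.
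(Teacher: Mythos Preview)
Your proof is correct and follows essentially the same route as the paper: induction on $d$ via the convolution recursion $a_{d}(k)=\sum_{n}\frac{1}{n+1}a_{d-1}(k-n)$, reduction to the sum $\sum_{j}\frac{\log^{d-1}(k-j+2)}{(j+1)(k-j+1)}$, and then the partial-fraction identity $\frac{1}{(j+1)(k-j+1)}=\frac{1}{k+2}\bigl(\frac{1}{j+1}+\frac{1}{k-j+1}\bigr)$ together with the harmonic-sum estimate. The only cosmetic difference is that the paper first symmetrizes the numerator via $\log^{d-2}(n+2)+\log^{d-2}(k-n+2)\approx\log^{d-2}(k+2)$ and then applies partial fractions, whereas you apply partial fractions first and estimate the two resulting sums separately; both arrive at the same conclusion with the same ingredients.
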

\begin{proof}[Proof of Lemma \ref{lemma1}]
We will prove the Lemma by induction on $d \in {\mathbb{N}}$. 
It is obvious for $d=1$. Thus let $d\ge 2$ and suppose the lemma is true for $d-1$. Also we observe that  there is a constant $c>0$ such that for all $k\ge 0$ and $0\le n\le k$ we have
$$ c \log^{d-2} (k+2)\le \log^{d-2} (n+2)+\log^{d-2} (k-n+2) \le 2 \log^{d-2} (k+2).$$ Then for $k\ge 0$
\begin{align*}
 a_d(k)&= \sum_{n_1+\dots+n_d=k}\frac{1}{(n_1+1)\dots(n_d+1)}\\
 &= \sum_{n=0}^k\frac{1}{n+1}\sum_{n_2+\dots+n_d=k-n}\frac{1}{(n_2+1)\dots(n_d+1)}\\
  &\approx \sum_{n=0}^k\frac{1}{n+1}\frac{\log^{d-2}(k-n+2)}{k-n+1} \ \ \text{   by the inductive assumption}\\
  &= \frac{1}{2} \sum_{n=0}^{k}\frac{\log^{d-2}(n+2)+\log^{d-2}(k-n+2)}{(n+1)(k-n+1)}\\
  &\approx \log^{d-2}(k+2) \sum_{n=0}^{k} \frac{1}{(n+1)(k-n+1)} \ \ \ \text{  by the earlier observation}\\
  &= \frac{\log^{d-2}(k+2)}{k+2} \sum_{n=0}^{k} \frac{1}{n+1}+ \frac{1}{k-n+1}\\
  &\approx \frac{\log^{d-1}(k+2)}{k+1}. \qedhere
\end{align*} \end{proof}
Next, we prove the equivalence $[\varphi]_{HS_d}\approx|[\varphi]|_{HS_d}$ which appears in (\ref{HStwo}).
\begin{lemma}\label{lemma2} Let $d \in \mathbb{N}$. Then 
  \[ \int_0^1 t^{k } \left(\frac{1}{t} \log \frac{1}{ 1 - t}\right)^{d-1} d t \approx \frac{\log^{d-1} ( k+2)}{k+1}, \ \ \ \ k \ge d.\]
\end{lemma}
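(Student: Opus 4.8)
Since the case $d=1$ reduces to the identity $\int_0^1 t^k\,dt=\frac{1}{k+1}$ (the integrand is simply $t^k$), I will assume $d\ge 2$. The guiding principle is that for large $k$ the weight $t^k\,dt$ concentrates its mass in a window of width $\approx 1/k$ just below $t=1$, where the slowly varying factor $h(t):=\frac1t\log\frac{1}{1-t}$ takes the value $h(1-1/k)\approx\log k\approx\log(k+2)$. Thus I expect the integral to behave like $\frac{1}{k+1}\,h(1-1/k)^{d-1}\approx\frac{\log^{d-1}(k+2)}{k+1}$, and the proof is just a matter of turning this heuristic into matching two-sided bounds. Note that the integral is positive and finite for every $k$, since the integrand is comparable to $t^k$ near $t=0$ and to $(\log\frac{1}{1-t})^{d-1}$, which is integrable, near $t=1$.

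For the lower bound I would integrate only over the interval $[1-2/k,\,1-1/k]$. There $t^k\ge(1-2/k)^k\ge c>0$, while $\log\frac{1}{1-t}\ge\log(k/2)\ge c\log(k+2)$ and $\frac1t\ge 1$, so $h(t)^{d-1}\ge c\log^{d-1}(k+2)$; since the interval has length $1/k$, the integral is at least $c\,\frac{\log^{d-1}(k+2)}{k+1}$. For the upper bound I would split the integral at $t=1-1/k$. On $[0,1-1/k]$ the function $h$ is increasing, because its power series $h(t)=\sum_{m\ge0}\frac{t^m}{m+1}$ has nonnegative coefficients; hence $h(t)\le h(1-1/k)\le 2\log k\le C\log(k+2)$, and $\int_0^{1-1/k}t^k h(t)^{d-1}\,dt\le C\log^{d-1}(k+2)\int_0^1 t^k\,dt=C\,\frac{\log^{d-1}(k+2)}{k+1}$. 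On the remaining interval $[1-1/k,1]$ I bound $t^k\le1$ and $\frac1t\le 2$, reducing matters to $\int_{1-1/k}^1\big(\log\frac{1}{1-t}\big)^{d-1}\,dt$.

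The one nontrivial point, and the step I expect to be the main obstacle, is this last integral. Substituting $u=1-t$ and then $v=\log(1/u)$ turns it into the incomplete Gamma integral $\int_{\log k}^\infty v^{d-1}e^{-v}\,dv$. Integrating by parts $d-1$ times (or invoking the standard asymptotics of $\Gamma(d,\cdot)$) yields $\int_a^\infty v^{d-1}e^{-v}\,dv\le C\,a^{d-1}e^{-a}$ for $a\ge 1$; with $a=\log k$ this gives the bound $C\,\frac{\log^{d-1}(k+2)}{k+1}$, completing the upper estimate. The finitely many small values of $k$ left uncovered by the asymptotic constants are handled by a direct finite check, since on any finite range the ratio of the two positive quantities is bounded above and below. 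An alternative route that avoids the Gamma integral is to apply Lemma \ref{lemma1} with $d$ replaced by $d-1$, writing $h(t)^{d-1}\approx\sum_{j\ge0}\frac{\log^{d-2}(j+2)}{j+1}t^j$, integrating term by term (legitimate since all terms are positive), and estimating the resulting series $\sum_{j\ge0}\frac{\log^{d-2}(j+2)}{(j+1)(k+j+1)}$ by splitting at $j=k$: the part $j\le k$ supplies the main term $\approx\frac{\log^{d-1}(k+2)}{k+1}$, while the tail $j>k$ contributes only $\approx\frac{\log^{d-2}(k+2)}{k+1}$, which is smaller by a logarithmic factor and therefore absorbed.
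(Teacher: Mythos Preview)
Your argument is correct. Your primary route, however, differs from the paper's. You split the integral at $t=1-1/k$ and handle the two pieces by elementary means: monotonicity of $h(t)=\sum_{m\ge0}t^m/(m+1)$ on the left, and the incomplete Gamma estimate $\Gamma(d,a)\le C_d\,a^{d-1}e^{-a}$ on the right, with the lower bound coming from the window $[1-2/k,\,1-1/k]$. The paper instead invokes Lemma~\ref{lemma1} (with $d$ replaced by $d-1$) to expand $h(t)^{d-1}\approx\sum_n\frac{\log^{d-2}(n+2)}{n+1}t^n$, integrates term by term, and splits the resulting series $\sum_n\frac{\log^{d-2}(n+2)}{(n+1)(n+k+1)}$ at $n=k$ --- precisely the alternative you sketch at the end. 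Your direct approach is self-contained and avoids relying on Lemma~\ref{lemma1}, at the cost of the Gamma-function computation; the paper's route reuses already-established machinery and gives both bounds simultaneously from the series estimate. Either is entirely adequate here.
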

Given the Lemma, we expand
\begin{eqnarray*}
|[\psi]|_{HS_d}^2&=&|\widehat{\psi}(0)|^2+\int_{{\mathbb D}} \left|\sum_{k=1}^\infty\widehat{\psi}(k)kz^{k-1}\right|^2\log^{d-1}\frac{1}{1-|z|^2}\frac{dA(z)}{\pi}\crcr
&=&|\widehat{\psi}(0)|^2+\sum_{k=1}^\infty k^2\left|\widehat{\psi}(k)\right|^2\int_0^1\log^{d-1}\frac{1}{1-t}t^{k-1}dt\crcr
&\approx&|\widehat{\psi}(0)|^2+\sum_{k=1}^\infty k^2\left|\widehat{\psi}(k)\right|^2\frac{\log^{d-1}(k+2)}{k+1}\crcr
&\approx&[\psi]_{HS_d}^2,
\end{eqnarray*}
obtaining the desired conclusion.
\begin{proof}[Proof of Lemma \ref{lemma2}]
The case $d=1$ is obvious, leaving us to consider $d\ge 2$. We will also assume that $k \ge 2.$ Then by Lemma \ref{lemma1} we have 
\begin{eqnarray*}
  \int_0^1 t^{k } \left(\frac{1}{t} \log \frac{1}{1-t}\right)^{d-1} d t & \approx & \int_0^1
  t^{k} \sum_{n = 0}^{\infty}{} \frac{\log^{d - 2} ( n+2)}{n+1} t^n d t\\
  & = & \sum_{n = 0}^{\infty}{} \frac{\log^{d - 2} ( n+2)}{(n+1) ( n + k+1)} = S_1 +
  S_2,
\end{eqnarray*}
where 
\begin{align*}
  S_1 &=  \sum_{n = 0}^{k-1}{} \frac{\log^{d - 2} ( n+2)}{(n+1) ( n + k+1)}
   \approx  \frac{1}{k+1} \sum_{n = 0}^{k-1}{} \frac{\log^{d - 2} (
  n+2)}{n+1}
   \approx  \frac{1}{k+1} \int_1^{k+2} \frac{\log^{d - 2} ( t)}{t} d t \\
  &= \frac{1}{d-1}\frac{\log^{d-1} ( k+2)}{k+1}
\end{align*}
and
\begin{align*}
  S_2 &=  \sum_{n = k}^{\infty}{} \frac{\log^{d - 2} ( n+2)}{(n+1) ( n + k+1)}
   \le  \sum_{n = k+1}^{\infty}{} \frac{\log^{d - 2} ( n+1)}{n^2}
   \le  \sum_{j = 1}^{\infty} \sum_{n = k^j }^{k^{j + 1}-1} \frac{}{}
  \frac{\log^{d - 2} ( n+1)}{n^2}\\
  &\le  \sum_{j = 1}^{\infty} (j+1)^{d-2} \log^{d-2} k \sum_{n = k^j }^{k^{j + 1}-1} \frac{1}{n^2} 
  \le  \log^{d - 2} ( k+2) \sum_{j = 1}^{\infty} (j+1)^{d-2} \int_{k^j-1}^{\infty} \frac{1}{x^2}dx \\
  &= \frac{\log^{d - 2} ( k+2)}{k+1}\sum_{j = 1}^{\infty} (j+1)^{d-2} \frac{k+1}{k^j-1}
  \le \frac{\log^{d - 2} ( k+2)}{k+1}\sum_{j = 1}^{\infty} (j+1)^{d-2} \frac{k+1}{(k-1)k^{j-1}} \\
  &\le  \frac{\log^{d - 2} ( k+2)}{k+1}\sum_{j = 1}^{\infty} (j+1)^{d-2} \frac{3}{2^{j-1}} \
   =  o\left(\frac{\log^{d - 1} ( k+2)}{k+1}\right). \qedhere \end{align*}
  \end{proof}
  
  Now, the duality between $\mathcal{D}_d$ and $HS_d$ under the duality pairing given by the inner product of ${\mathcal D}$ is easily seen by considering $[\cdot]_d$
  and $[\cdot]_{HS_d}$. They are weighted $\ell^2$ norms and duality is established by means of the Cauchy-Schwarz inequality. 
  
  Next we will prove that $[\varphi]_d\approx|[\varphi]|_d$. This is equivalent to proving that the dual space of $HS_d$, with respect to the Dirichlet inner product 
  $\langle \cdot\,,\,\cdot\rangle_{\mathcal D}$, is the Hilbert space with the norm $|[\cdot]|_d$.  
  
  Let $d\in \mathbb{N}$ and set, for $0< t<1$, $w_d(t)=\left(\frac{1}{t} \log \frac{1}{ 1 - t}\right)^{d}$ and, for $0<|z|<1$, $W_d(z)=w_d(|z|^2)$ and $W_d(0)=1$. 
  
  \begin{lemma}\label{lemma3} Let $d\in \mathbb{N}$. Then
$$\int_{1-\eps}^1 w_d(t)dt \cdot \int_{1-\eps}^1 \frac{1}{w_d(t)}dt \approx \eps^2 \ \ \text{ as } \eps \to 0.$$
\end{lemma}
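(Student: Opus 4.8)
\emph{Proof proposal.} The plan is to strip off the harmless factor $1/t$, reduce both integrals to integrals of powers of $\log\frac{1}{1-t}$, and then estimate these by monotonicity together with one integration by parts. The only genuine lower bound comes for free from Cauchy--Schwarz, and the only genuine upper bound is a mild recursion.

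First I would reduce. For $\eps\le 1/2$ and $t\in[1-\eps,1]$ the factor $(1/t)^d$ lies between $1$ and $2^d$, so $w_d(t)\approx\left(\log\frac{1}{1-t}\right)^d$ with constants depending only on $d$. After the substitution $s=1-t$ it therefore suffices to prove $I(\eps)\,J(\eps)\approx\eps^2$, where
\[
I(\eps)=\int_0^\eps\Big(\log\tfrac1s\Big)^d\,ds,\qquad
J(\eps)=\int_0^\eps\Big(\log\tfrac1s\Big)^{-d}\,ds.
\]

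For the lower bound I would not even reduce: writing $1=w_d^{1/2}\cdot w_d^{-1/2}$ and applying the Cauchy--Schwarz inequality to $\int_{1-\eps}^1 1\,dt=\eps$ gives $\eps^2\le\int_{1-\eps}^1 w_d\cdot\int_{1-\eps}^1 1/w_d$ with no loss, so that half of the statement is automatic. For the upper bound, the reciprocal integral is controlled by monotonicity alone: since $\log\frac1s\ge\log\frac1\eps$ on $(0,\eps]$ we have $J(\eps)\le\eps\left(\log\frac1\eps\right)^{-d}$. The remaining ingredient is an upper bound on $I(\eps)$, which I would get from the integration-by-parts recursion $I_d(\eps)=\eps\left(\log\frac1\eps\right)^d+d\,I_{d-1}(\eps)$ (the boundary term at $s=0$ vanishes), yielding $I_d(\eps)=\eps\sum_{j=0}^d\frac{d!}{(d-j)!}\left(\log\frac1\eps\right)^{d-j}$ and hence $I(\eps)\le C\eps\left(\log\frac1\eps\right)^d$ for small $\eps$. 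Multiplying the two upper bounds gives $I(\eps)J(\eps)\le C\eps^2$, which together with Cauchy--Schwarz finishes the proof.

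The hard part---such as it is---is the upper bound on $I(\eps)$: even though $(\log\frac1s)^d$ exceeds its endpoint value $(\log\frac1\eps)^d$ throughout $(0,\eps)$, the recursion shows the surplus contributes only lower-order logarithmic powers, so $I(\eps)$ stays comparable to $\eps(\log\frac1\eps)^d$. Everything else is either a trivial monotonicity estimate or a single application of Cauchy--Schwarz.
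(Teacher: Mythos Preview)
Your argument is correct and follows the same overall outline as the paper: reduce to $\tilde w(t)=(\log\frac{1}{1-t})^d$ by discarding the harmless factor $1/t$, get the lower bound for free from Cauchy--Schwarz, and bound $\int_{1-\eps}^1 1/\tilde w$ by monotonicity. The one point of divergence is the upper bound for $\int_{1-\eps}^1 \tilde w$ (your $I(\eps)$). You obtain it via the integration-by-parts recursion $I_d(\eps)=\eps(\log\frac1\eps)^d+d\,I_{d-1}(\eps)$, which yields the exact closed form $I_d(\eps)=\eps\sum_{j=0}^d \frac{d!}{(d-j)!}(\log\frac1\eps)^{d-j}$ and hence the sharp constant in the leading term. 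The paper instead splits $[1-\eps,1)$ into the geometric blocks $[1-\eps^k,1-\eps^{k+1})$, bounds $\tilde w$ on each block by its right endpoint value $(k+1)^d(\log\frac1\eps)^d$, and sums the resulting series $\sum_k (k+1)^d\eps^k(1-\eps)$. Your route is slightly cleaner here and gives more information; the paper's dyadic decomposition is a more generic device that would apply equally well to weights without a convenient antiderivative.
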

\begin{proof}
Write $\tilde{w}(t)=(\log \frac{1}{1-t})^d$, and note that it suffices to establish the lemma for $\tilde{w}$ in place of $w_d$.
Let $\eps >0$. Then $\tilde{w}$ is increasing in $(0,1)$
and $\tilde{w}(1-\eps^{k+1})=(k+1)^d (\log \frac{1}{\eps})^d$, hence
\begin{align*}
\int_{1-\eps}^1 \tilde{w}(t) dt & = \sum_{k=1}^\infty \int_{1-\eps^k}^{1-\eps^{k+1}} \tilde{w}(t)dt\\
& \le \sum_{k=1}^\infty \tilde{w}(1-\eps^{k+1})(\eps^k-\eps^{k+1})\\
&= \sum_{k=1}^\infty (k+1)^d (\log \frac{1}{\eps})^d \eps^k (1-\eps)\\
&\approx \eps (\log \frac{1}{\eps})^d \frac{1}{(1-\eps)^d}\end{align*}

For  $1/\tilde{w}$ we just notice that it is decreasing and hence
 \begin{align*}
\int_{1-\eps}^1 \frac{1}{\tilde{w}(t)} dt & \le  \frac{1}{\tilde{w}(1-\eps)} \eps = \frac{\eps}{(\log \frac{1}{\eps})^d}
 \end{align*}

Thus as $\eps \to 0$ we have
$$\eps^2\le \int_{1-\eps}^1 \tilde{w}(t) dt \int_{1-\eps}^1 \frac{1}{\tilde{w}(t)} dt = O(\eps^2).$$\end{proof}

For $0<h<1$ and $s\in [-\pi,\pi)$ let $S_h(e^{is})$ be the Carleson square at $e^{is}$, i.e.
$$S_h(e^{is})= \{re^{it}: 1-h<r<1, |t-s|<h\}.$$ A positive function $W$ on the unit disc is said to satisfy the Bekoll\'{e}-Bonami condition (B2) if there exists $c>0$ such that
$$\int_{S_h(e^{is})} W dA \cdot \int_{S_h(e^{is})} \frac{1}{W} dA \le c h^4$$ for every Carleson square  $S_h(e^{is})$. If $d\in \mathbb{N}$ and if $W_d(z)$ is defined as before, then
$$\int_{S_h(e^{is})} W_d dA \cdot \int_{S_h(e^{is})} \frac{1}{W_d} dA=h^2 \int_{1-h}^1w_d(t)dt \cdot \int_{1-h}^1 \frac{1}{w_d(t)}dt \approx h^4$$ by Lemma \ref{lemma3},
at least if $0<h<1/2$. Observe that both $W_d$ and $1/W_d$ are positive and integrable in the unit disc, hence it follows that the estimate holds for all $0< h\le 1$. 

Thus $W_d$ satisfies the condition (B2). Furthermore, note that  if $f(z)= \sum_{k=0}^\infty \hat{f}(k)z^k$ is analytic in the open unit disc, then $$\int_{|z|<1}|f(z)|^2w_d(|z|^2)\frac{dA(z)}{\pi}= \sum_{k=0}^\infty w_k |\hat{f}(k)|^2,$$ where $w_k= \int_0^1t^k w_d(t)dt \approx \frac{\log^{d}(k+2)}{k+1}$.

A special case of Theorem 2.1 of Luecking's paper \cite{Luecking} says that if $W$ satisfies the condition (B2) by Bekoll\'{e} and Bonami \cite{BeBo}, 
then one has a duality between the spaces $L^2_a(WdA)$ and $L^2_a(\frac{1}{W}dA)$ with respect to the pairing given by $\int_{|z|<1} f \overline{g}dA$. Thus, we have
\begin{align*}
\int_{|z|<1}|g(z)|^2\frac{1}{W_d(z)}dA 
    &\approx \sup_{f\ne 0} \frac{\left|\int_{|z|<1}g(z) \overline{f(z)}\frac{dA(z)}{\pi}\right|^2}{\int_{|z|<1}|f(z)|^2 W_d(z)dA}\\
    &= \sup_{f\ne 0} \frac{\left|\sum_{k=0}^\infty \frac{\hat{g}(k)}{(k+1)\sqrt{w_k}} \sqrt{w_k}\overline{\hat{f}(k)}\right|^2}{\sum_{k=0}^\infty w_k|\hat{f}(k)|^2}\\
    &= \sum_{k=0}^\infty \frac{1}{(k+1)^2 w_k}|\hat{g}(k)|^2
\end{align*}
This finishes the proof of \eqref{HStwo}. It remains to demonstrate \eqref{b}. We defer its proof to the next section.

By Theorem \ref{norms} we have the following chain of inclusions:
  \[ \ldots \hookrightarrow HS_{d + 1} \hookrightarrow
     HS_d \hookrightarrow \ldots \hookrightarrow HS_2
     \hookrightarrow HS_1 =\mathcal{D}  =
     \mathcal{D}_1 \hookrightarrow \mathcal{D}_2
     \hookrightarrow \ldots \hookrightarrow \mathcal{D}_d
     \hookrightarrow \mathcal{D}_{d + 1} \hookrightarrow
     \ldots \]
  with duality w.r.t. $\mathcal{D}$ linking spaces with the
  same index. It might be interesting to compare this sequence with the sequence of Banach spaces related to the Dirichlet spaces studied in \cite{ARSrelated}. Note that for $d\ge3$ the reproducing kernel of $HS_d$ is continuous up to the boundary. Hence functions in $HS_d$ extend continuously to the closure of the unit disc, for $d \geq 3$.
\section*{Hilbert-Schmidt norms of Hankel-type operators} 
 Let $\{e_n\}$ be the canonical orthonormal basis of $\mathcal D$, $e_n(z) = \frac{z^n}{\sqrt{n+1}}$. Equation \eqref{b} follows from the computation
 \begin{multline*}
 \sum_{k=0}^\infty\sum_{|n|=k}|\langle e_{n_1}... e_{n_d},\psi\rangle|^2 =\sum_{k=0}^\infty\sum_{|n|=k} \frac{1}{(n_1+1)\cdot ... \cdot (n_d+1)} |\langle z^{n_1} ... z^{n_d},\psi\rangle|^2\\
 =\sum_{k=0}^\infty\sum_{|n|=k} \frac{1}{(n_1+1)\cdot ... \cdot (n_d+1)} |\langle z^{k},\psi\rangle|^2
 =\sum_{k=0}^\infty\sum_{|n|=k} \frac{(k+1)^2}{(n_1+1)\cdot ... \cdot (n_d+1)} |\hat{\psi}(k)|^2\\
 =\sum_{k=0}^\infty (k+1) a_d(k) |\hat{\psi}(k)|^2
 \approx\sum_{k=0}^\infty \frac{\log^{d-1}(k+2)}{k+1}|\hat{\psi}(k)|^2.
 \end{multline*}
%Observe that
%\begin{equation}\label{natale}
% \|b\|_{HS_d}^2=\sum_{n\in \mathbb N^d} |\langle e_{n_1}\ldots e_{n_d}  , b \rangle_{\mathcal D}|^2.
%\end{equation}

%Then for any $b\in X(\mathcal H)$ \marginpar{Forse X(H) e` da definire...} we have $H^d_b:\mathcal H \to \overline{ \mathcal H}$ defined by
%\[\langle H_b^d, \rangle\]
%For any $d\in \mathbb{N}$, set 
%\[HS_d=HS_d(\mathcal H)=\left\{b\in Hol(\Omega) : \sum_{n\in \mathbb N^d} |\langle e_{n_1}\ldots e_{n_d}  , b \rangle_{\mathcal H}|^2 =\|b\|^2_{HS_d}<\infty\right\}.\]
%The reason for the name $\tmop{HS}$ is that, when $d = 2$, $\tmop{HS}_2$
%contains those $b$ for which the Hankel operator $H_b : \mathcal{H} \rightarrow \overline{\mathcal{H}}$ defined by $<
%H_b e_j, \overline{e_k} >_{\overline{\mathcal{H}}} = < e_j e_k,
%b >_{\mathcal{H}}$ belongs to the Hilbert-Schmidt class.
Polarizing this expression for $\|\cdot\|_{HS_d}$, the inner product of $HS_d$ can be written
\[\langle \psi_1 , \psi_2 \rangle_{HS_d}=\sum_{(n_1, \ldots, n_d)}\langle \psi_1, e_{n_1}\ldots e_{n_d}  \rangle_{\mathcal D}\langle e_{n_1}\ldots e_{n_d}  , \psi_2 \rangle_{\mathcal D}.\]
Hence, for any $\lambda, \zeta \in {\mathbb D}$,
%and any $f_1,\ldots, f_d \in \mathcal H$, 
%%$H_{k_\lambda}$ %is a rank $1$ operator 
%\[\langle f_1\cdots f_d , k_\lambda\rangle_{\mathcal H}=f_1(\lambda) \cdots f_n(\lambda)=....?? %\langle f_1\cdots f_d , k_\lambda\rangle_{\mathcal H}
%\]
%and %, for any $\lambda, z\in \Omega$
\begin{align*}
\langle k_\lambda, k_\zeta \rangle_{HS_d}&=\sum_{n\in\mathbb N^d}\langle k_\lambda, e_{n_1}\ldots e_{n_d} \rangle_{\mathcal D}
\langle e_{n_1}\ldots e_{n_d}  , k_\zeta \rangle_{\mathcal D}=\sum_{n\in\mathbb N^d}\overline{e_{n_1}(\lambda)\ldots e_{n_d}(\lambda)}e_{n_1}(\zeta)\ldots e_{n_d}(\zeta)\\
&=\left(\sum_{i=0}^{\infty}\overline{e_i(\lambda)}e_i(\zeta)\right)^d=k_\lambda(\zeta)^d=\langle k_\lambda^d, k_\zeta^d\rangle_{\mathcal{D}_d}.
\end{align*}
That is,
\begin{proposition}\label{naomi}
The map $U:k_\lambda \mapsto k_\lambda^d$ extends to a unitary map $HS_d \to {\mathcal{D}_d}$.
\end{proposition}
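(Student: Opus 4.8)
The plan is to invoke the standard principle that a map between Hilbert spaces which carries a total set to a total set while preserving inner products extends uniquely to a unitary. The one identity with genuine content, namely $\langle k_\lambda, k_\zeta \rangle_{HS_d} = k_\lambda(\zeta)^d = \langle k_\lambda^d, k_\zeta^d\rangle_{\mathcal{D}_d}$, has already been verified in the display preceding the statement, so the remaining work consists of a membership check, two density claims, and a routine extension argument.

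First I would confirm that each Dirichlet kernel $k_\lambda$ actually lies in $HS_d$, so that $U$ is defined on the correct domain. Since $\widehat{k_\lambda}(k) = \overline{\lambda}^k/(k+1)$, formula \eqref{HSone} gives $\|k_\lambda\|_{HS_d}^2 = \sum_{k\ge 0} a_d(k)\,|\lambda|^{2k}$, and expanding $a_d$ by its definition identifies this with $\bigl(\tfrac{1}{|\lambda|^2}\log\tfrac{1}{1-|\lambda|^2}\bigr)^d$, which is finite for every $\lambda\in\mathbb{D}$ (this is also immediate from Lemma \ref{lemma1}). I would then define $U$ on the linear span of $\{k_\lambda:\lambda\in\mathbb{D}\}$ by linearity, setting $U(\sum_i c_i k_{\lambda_i}) = \sum_i c_i k_{\lambda_i}^d$. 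Extending the kernel identity bilinearly shows that for $v=\sum_i c_i k_{\lambda_i}$ and $w=\sum_j d_j k_{\zeta_j}$ one has $\langle v,w\rangle_{HS_d} = \sum_{i,j} c_i\overline{d_j}\,\langle k_{\lambda_i}^d, k_{\zeta_j}^d\rangle_{\mathcal{D}_d} = \langle Uv, Uw\rangle_{\mathcal{D}_d}$; taking $v=w$ shows $\|Uv\|_{\mathcal{D}_d}=\|v\|_{HS_d}$, which simultaneously establishes that $U$ is well defined (a null combination maps to a null combination) and that it is isometric on the span.

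It then remains to check that both spans are dense. On the target side, the functions $k_\lambda^d$ are exactly the reproducing kernels of $\mathcal{D}_d$, and the reproducing kernels of any reproducing kernel Hilbert space always have dense linear span, so the range of $U$ is dense in $\mathcal{D}_d$. On the domain side, suppose $\psi\in HS_d$ is orthogonal to every $k_\lambda$. Using \eqref{HSone} and $\widehat{k_\lambda}(k)=\overline{\lambda}^k/(k+1)$, one finds $\overline{\langle k_\lambda,\psi\rangle_{HS_d}} = \sum_{k\ge 0}(k+1)a_d(k)\widehat{\psi}(k)\lambda^k$, a power series in $\lambda$ vanishing identically on $\mathbb{D}$; since $(k+1)a_d(k)>0$, every coefficient $\widehat{\psi}(k)$ vanishes, whence $\psi=0$ and the span of $\{k_\lambda\}$ is dense in $HS_d$.

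Combining these facts, $U$ is an inner-product-preserving map of a dense subspace of $HS_d$ with dense range, so it extends uniquely to an isometry $HS_d\to\mathcal{D}_d$, and density of the range forces surjectivity, making the extension unitary. I expect no serious obstacle: the only step with real content, the equality of the two Gram matrices, is already in hand, and the rest are short Fourier-coefficient computations. The single point needing a little care is that $U$ must be shown well defined on the whole span rather than merely on the individual kernels, but this is automatic from the isometry identity established above.
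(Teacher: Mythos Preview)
Your proposal is correct and follows exactly the paper's approach: the paper establishes the identity $\langle k_\lambda, k_\zeta \rangle_{HS_d} = \langle k_\lambda^d, k_\zeta^d\rangle_{\mathcal{D}_d}$ in the display immediately preceding the proposition and then simply writes ``That is,'' before stating the result, leaving the standard extension principle implicit. You have spelled out the routine details (membership, well-definedness, density on both sides) that the paper omits, and all of your verifications are correct.
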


When $d = 2$, $HS_2$
contains those functions $b$ for which the Hankel operator $H_b : \mathcal{D} \rightarrow \overline{\mathcal{D}}$, defined by $\langle
H_b e_j, \overline{e_k} \rangle_{\overline{\mathcal{D}}} = \langle e_j e_k, b \rangle_{\mathcal{D}}$, belongs to the Hilbert-Schmidt class.

Analogous interpretations can be given for $d\ge3$, but then function spaces on polydiscs are involved. We consider the case $d=3$, which is representative. Consider first the operator 
$T_b:{\mathcal D}\to\overline{\mathcal D}\otimes\overline{\mathcal D}$ defined by
\begin{equation*}
 \left<T_bf,\overline{g}\otimes\overline{h}\right>_{\overline{\mathcal D}\otimes\overline{\mathcal D}}=\left<fgh,b\right>_{\mathcal D}.
\end{equation*}
The formula uniquely defines an operator, whose action is
\begin{align*} T_bf(z,w) &=\langle T_bf, \overline{k}_z\overline{k}_w \rangle_{\overline{\mathcal D}\otimes\overline{\mathcal D}}\\
&=\langle f k_zk_w,b\rangle_{\mathcal D}\\
&= \sum_{n,m,j}\hat{f}(j)\frac{\overline{z}^n}{n+1}\frac{\overline{w}^m}{m+1} \langle \zeta^{n+m+j},b\rangle_{\mathcal D}\\
&= \sum_{n,m,j}\hat{f}(j) \overline{\hat{b}(n+m+j)} \frac{n+m+j+1}{(n+1)(m+1)}\overline{z}^n\overline{w}^m
\end{align*}
Then, the Hilbert-Schmidt norm of $T_b$ is: 
$$
 \sum_{l,m,n}\left|\left\langle T_be_l,e_me_n\right\rangle_{\overline{\mathcal D}\otimes\overline{\mathcal D}}\right|^2= 
 \sum_{l,m,n}\left|\left\langle e_le_me_n,b\right\rangle_{\mathcal D}\right|^2=\|b\|_{HS_3}^2.
$$
Similarly, we can consider 
$U_b:{\mathcal D}\otimes{\mathcal D}\to\overline{\mathcal D}$ defined by
\begin{equation*}
 \left<U_b(f\otimes g),\overline{h}\right>_{\overline{\mathcal D}}=\left<fgh,b\right>_{\mathcal D}.
\end{equation*}
The action of this operator is given by
$$
U_b(f\otimes g)(\overline{z})=\sum_{l,m,n=0}^\infty\widehat{f}(l)\widehat{g}(m)\frac{(l+m+n+1)\overline{\widehat{b}(l+m+n)}}{n+1}\overline{z}^n.
$$
The Hilbert-Schmidt norm of $U_b$ is still $\|b\|_{HS_3}$.

\section*{Carleson measures for the spaces $\mathcal{D}_d$ and $HS_d$}
The (B2) condition allows us to characterize the Carleson measures for the spaces $\mathcal{D}_d$ and $HS_d$. 
Recall that a nonnegative Borel measure $\mu$ on the open unit disc
is Carleson for the Hilbert function space $H$ if the inequality
$$
\int_{|z|<1}|f|^2d\mu\le C(\mu)\|f\|_{H}^2
$$
holds with a constant $C(\mu)$ which is independent of $f$. The characterization \cite{ARS02} shows that, since the (B2) condition holds, then  

\begin{theorem}\label{carleson}
  For $d \in \mathbb{N}$, a measure $\mu \geq 0$ on $\{|z|<1\}$
  is Carleson for $\mathcal{D}_d$ if and only if for $|a|<1$ we have:
  \[ \int_{\tilde{S} ( a)} \log^{d-1}\left(\frac{1}{1-|z|^2}\right) ( 1 - | z |^2) \mu ( S ( z) \cap S (
     a))^2 \frac{d x d y}{( 1 - | z |^2)^2} \le C_1(\mu) \mu ( S ( a)), \]
  where $S ( a)=\{z: 0<1-|z|<1-|a|,\ |\arg(z\overline{a})|<1-|a|\}$ is the Carleson box with vertex $a$ and $\tilde{S} ( a) = \{z: 0<1-|z|<2(1-|a|),\ |\arg(z\overline{a})|<2(1-|a|)\}$ is
  its ``dilation''.
\end{theorem}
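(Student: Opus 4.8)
The plan is to recognize $\mathcal{D}_d$ as a weighted Dirichlet space and then to invoke the Carleson measure characterization of Arcozzi--Rochberg--Sawyer \cite{ARS02}, whose hypotheses are furnished by the (B2) condition already verified above. By Theorem \ref{norms} the norm on $\mathcal{D}_d$ is equivalent to one of weighted Dirichlet type, $\|\varphi\|^2 \approx |\varphi(0)|^2 + \int_{\mathbb D}|\varphi'(z)|^2\,\omega_d(z)\,dA(z)$ with weight $\omega_d(z) = \log^{-(d-1)}(1/(1-|z|^2))$. Since $\omega_d$ is comparable near the boundary to the reciprocal of a weight of the form $W_{d-1}$, and the (B2) class is invariant under $W \mapsto 1/W$, the estimate established above shows that $\omega_d$ is a Bekoll\'e--Bonami weight; this is the only structural input about the space that the argument will use. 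Thus it suffices to characterize the Carleson measures for the weighted Dirichlet space with weight $\omega_d$.

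First I would discretize the space on a Bergman (dyadic) tree $\mathcal{T}$ on ${\mathbb D}$, whose vertices $\alpha$ correspond to Whitney boxes $Q_\alpha$ centered at points $z_\alpha$ with $1-|z_\alpha|\approx\operatorname{diam}(Q_\alpha)$. The (B2) condition forces $\omega_d$ to be comparable across each Whitney box and doubling from one generation to the next, so that the continuous energy is comparable to the discrete tree energy $\sum_{\alpha}\omega_d(z_\alpha)\,|\Delta f(\alpha)|^2$, where $\Delta f(\alpha)=f(z_\alpha)-f(z_{\alpha^-})$ is the increment across the edge joining $\alpha$ to its parent $\alpha^-$. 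In this model the Carleson embedding $\mathcal{D}_d\hookrightarrow L^2(\mu)$ becomes the boundedness of the integration-along-the-tree operator, and the ARS characterization \cite{ARS02} reduces it to the testing (tree) condition
\[
\sum_{\gamma \ge \beta} \frac{1}{\omega_d(z_\gamma)}\,\mu\big(S(z_\gamma)\big)^2 \le C\,\mu\big(S(z_\beta)\big) \qquad \text{for every } \beta \in \mathcal{T},
\]
the reciprocal weight $1/\omega_d=\log^{d-1}(1/(1-|z|^2))$ entering through the duality between $\ell^2(\omega_d)$ and $\ell^2(1/\omega_d)$ --- precisely the Cauchy--Schwarz/Luecking duality already exploited for the norm equivalences in Theorem \ref{norms}.

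The remaining step is to transcribe this discrete condition back to the disc. Replacing the sum over $\gamma\ge\beta$ by integration against the M\"obius-invariant measure $\frac{dx\,dy}{(1-|z|^2)^2}$, the tent measure $\mu(S(z_\gamma))$ by $\mu(S(z)\cap S(a))$, and $1/\omega_d(z_\gamma)$ by $\log^{d-1}(1/(1-|z|^2))$, and enlarging $S(a)$ to its dilation $\tilde{S}(a)$ so that every Whitney box meeting a given point is accounted for (the usual device needed for the sufficiency direction), one is led to the integral inequality in the statement; matching the geometric factors is a routine but careful computation once the weight regularity is in hand.

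The main obstacle is the passage between the function space and its tree model, and it is here that the (B2) condition is indispensable: it is what guarantees that $\omega_d$ is comparable on Whitney boxes and doubling across generations, so that the discretization preserves the norm up to constants and the ARS duality applies verbatim. Two further points require care. First, the testing condition produced by the ARS machinery also carries an ``ancestor'' contribution $\sum_{\gamma \le \beta}\omega_d(z_\gamma)^{-1}\mu(S(z_\beta))^2$, which one must check is dominated by the main term; the slow, logarithmic variation of $\omega_d$ makes this term lower order. Second, both implications must be addressed --- necessity follows by testing the embedding on the reproducing kernels (equivalently, on the tent functions), while sufficiency uses the dilated boxes $\tilde{S}(a)$ together with a standard stopping-time decomposition on $\mathcal{T}$.
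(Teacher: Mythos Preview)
Your approach is essentially the same as the paper's: recognize $\mathcal{D}_d$ as a weighted Dirichlet space via Theorem~\ref{norms}, observe that the relevant weight satisfies the Bekoll\'e--Bonami condition (B2) by Lemma~\ref{lemma3}, and then invoke the Arcozzi--Rochberg--Sawyer characterization \cite{ARS02} directly. The paper in fact treats the theorem as an immediate corollary of \cite{ARS02} once (B2) is in hand and does not unpack the tree discretization or testing argument at all, so your detailed sketch of the ARS machinery (tree model, testing condition, ancestor terms, dilation) goes well beyond what is needed here---it is background on \cite{ARS02} rather than part of the proof of Theorem~\ref{carleson}.
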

The characterization extends to $HS_2$, with the weight $ \log^{-1}\left(\frac{1}{1-|z|^2}\right)$. Since functions in $HS_d$ are continuous for $d\ge3$, all finite measures are 
Carleson measures for these spaces. Once we know the Carleson measures, we can characterize the multipliers for $\mathcal{D}_d$ in a standard way.

\section*{The complete Nevanlinna-Pick property for $\mathcal{D}_d$} Next, we prove that the spaces $\mathcal{D}_d$ have the Complete Nevanlinna-Pick (CNP) Property. 
Much research has been done on  kernels with the CNP property in the past twenty years, following seminal work of Sarason and Agler.
See the monograph \cite{AgMcC} for a comprehensive and very readable introduction to this topic. We give here a definition which is simple to state, although perhaps 
not the most conceptual. An irreducible kernel $k:X\times X\to{\mathbb{C}}$ has the CNP property if there is a positive definite function $F:X\to{\mathbb{D}}$
and a nowhere vanishing function $\delta:X\to\mathbb{C}$ such that:
$$
k(x,y)=\frac{\overline{\delta}(x)\delta(y)}{1-F(x,y)}
$$
whenever $x,y$ lie in $X$. The CNP property is a property of the kernel, not of the Hilbert space itself.
\begin{theorem}\label{cnp}
 There are norms on $\mathcal{D}_d$ such that the CNP property holds.
\end{theorem}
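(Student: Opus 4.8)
The plan is to exhibit, for each $d$, a power-series kernel with positive coefficients that is comparable to $k^d$ (hence defines an equivalent norm on $\mathcal{D}_d$) and that can be recognized as a complete Nevanlinna--Pick kernel by an elementary coefficient criterion. Recall that for a kernel $\kappa(z,w)=\sum_{n\ge0}a_n(\overline z w)^n$ on ${\mathbb D}$ with $a_0=1$ and $a_n>0$, the CNP property in the form stated above holds with $\delta\equiv1$ and $F=1-1/\kappa$, provided $F$ is positive definite with $|F|<1$. Since $\kappa(z,z)\ge a_0=1$, one automatically has $0\le F(z,z)<1$, and then $|F(z,w)|^2\le F(z,z)F(w,w)<1$ once $F$ is positive definite; moreover $\kappa=1/(1-F)$ never vanishes, so $\kappa$ is irreducible. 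Thus everything reduces to showing that $F=1-1/\kappa$ has nonnegative Taylor coefficients, i.e. that the reciprocal series $1/\kappa=\sum_{n}b_n(\overline zw)^n$ satisfies $b_n\le0$ for $n\ge1$.

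At this point I would invoke Kaluza's theorem on reciprocals of power series: if $(a_n)$ is positive, $a_0=1$, and log-convex (meaning $a_n^2\le a_{n-1}a_{n+1}$ for all $n\ge1$), then the reciprocal coefficients satisfy $b_n\le0$ for $n\ge1$. As a sanity check, the Dirichlet coefficients $a_n=1/(n+1)$ are log-convex, recovering the known fact that $\mathcal{D}$ is CNP. The obstacle is that the natural sequence $a_d(k)$ is \emph{not} log-convex: for $d=2$ one computes $a_2(0)a_2(2)=\tfrac{11}{12}<1=a_2(1)^2$, and indeed a short calculation shows that $1-1/k^2$ has a negative coefficient, so the kernel $k^d$ itself fails the CNP property. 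This is exactly why the statement only asserts the existence of a suitable (equivalent) norm.

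To produce a log-convex sequence comparable to $a_d(k)$, I would use that Hausdorff moment sequences are automatically log-convex: if $a_n=\int_0^1 t^n\,d\mu(t)$ with $\mu\ge0$, then Cauchy--Schwarz gives $a_n^2\le a_{n-1}a_{n+1}$. I therefore set $\tilde a_k=\lambda\int_0^1 t^k\big(\tfrac1t\log\tfrac1{1-t}\big)^{d-1}\,dt$, choosing $\lambda>0$ so that $\tilde a_0=1$ (the defining integral is finite, since the integrand tends to $1$ at $0$ and is integrable at $1$). This $(\tilde a_k)$ is positive and log-convex. By Lemma \ref{lemma2} one has $\tilde a_k\approx\frac{\log^{d-1}(k+2)}{k+1}$ for $k\ge d$, and the finitely many remaining values are positive constants, so together with Lemma \ref{lemma1} we get $\tilde a_k\approx a_d(k)$ for all $k\ge0$. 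Hence the kernel $\tilde k(z,w)=\sum_k\tilde a_k(\overline zw)^k$ defines a norm equivalent to $\|\cdot\|_{\mathcal{D}_d}$, and by Kaluza's theorem together with the first paragraph it is a complete Nevanlinna--Pick kernel.

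The main work, and the only genuinely delicate point, is the realization that the naive kernel is not CNP so that one must change the norm, together with the choice of the comparable log-convex sequence; the moment representation makes log-convexity automatic, and Lemma \ref{lemma2} is precisely what certifies comparability with $a_d$. Verifying irreducibility and the small-index comparability is then routine.
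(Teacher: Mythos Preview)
Your proof is correct and follows the same overall strategy as the paper: both invoke the Kaluza/Agler--McCarthy criterion (a power-series kernel with $a_0=1$ and a positive log-convex coefficient sequence is CNP) and then construct a sequence comparable to $a_d(k)$ that satisfies this criterion. The difference lies only in how the comparable log-convex sequence is produced. The paper writes down the explicit sequence $a_n=\dfrac{\log^{d-1}(M(n+1))}{\log M\cdot(n+1)}$ for a large constant $M$ and checks log-convexity by computing the second derivative of $x\mapsto (d-1)\log\log x-\log x$. You instead take the Hausdorff moments $\tilde a_k=\lambda\int_0^1 t^k\bigl(\tfrac1t\log\tfrac1{1-t}\bigr)^{d-1}\,dt$, get log-convexity for free from Cauchy--Schwarz, and then invoke Lemma~\ref{lemma2} (already proved in the paper) for the comparability $\tilde a_k\approx a_d(k)$. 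Your route avoids the calculus verification and recycles existing machinery, while the paper's route is self-contained and does not need Lemma~\ref{lemma2}. Your additional remark that $k^d$ itself fails the coefficient test (and in fact fails CNP) is a nice piece of motivation that the paper does not include.
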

\begin{proof}A kernel $k:\mathbb{D}\times\mathbb{D}\to\mathbb{C}$ of the form $k(w,z)=\sum_{k=0}^\infty a_k(\overline{z}w)^k$ has the 
CNP property if $a_0=1$ and the sequence $\{a_n\}_{n=0}^\infty$ is 
positive and log-convex:
$$
\frac{a_{n-1}}{a_{n}}\le\frac{a_{n}}{a_{n+1}}.
$$
See \cite{AgMcC}, Theorem 7.33 and Lemma 7.38. Consider $\eta(x)=\alpha\log\log(x)-\log(x)$, with real $\alpha$. Then, $\eta^{\prime\prime}(x)=
\frac{\log^2(x)-\alpha\log(x)-\alpha}{x^2\log^2(x)}$, which is positive for $x\ge M_\alpha$, depending on $\alpha$. Let now
\begin{eqnarray}\label{notte}
 a_n&=&\frac{\log^{d-1}(M_d(n+1))}{\log(M_d)\cdot(n+1)}\approx\frac{1}{n+1}+\frac{\log^{d-1}(n+1)}{n+1}  
\end{eqnarray}
Then, the sequence $\{a_n\}_{n=0}^\infty$ provides the coefficients for a  kernel with the CNP property for the space $\mathcal{D}_d$.
\end{proof}

The CNP property has a number of consequences. For instance, we have that the space $\mathcal{D}_d$ and its multiplier algebra $M(\mathcal{D}_d)$ 
have the same interpolating sequences. Recall that a sequence $Z=\{z_n\}_{n=0}^\infty$ is \it interpolating \rm for a RKHS $H$ with reproducing kernel $k^H$ if the 
weighted restriction 
map $R:\varphi\mapsto\left\{\frac{\varphi(z_n)}{k^H(z_n,z_n)^{1/2}}\right\}_{n=0}^\infty$ maps $H$ boundedly onto $\ell^2$; while $Z$ is interpolating for the multiplier algebra $M(H)$
if $Q:\psi\mapsto\left\{{\psi(z_n)}\right\}_{n=0}^\infty$ maps $M(H)$ boundedly onto $\ell^\infty$. The reader is referred to \cite{AgMcC} and to the second chapter of \cite{SeipMonograph}
for more on this topic.

It is a reasonable guess that the \it universal interpolating sequences \rm for $\mathcal{D}_d$ and for its multiplier space $M(\mathcal{D}_d)$ 
are characterized by a Carleson condition and a separation condition, as described in \cite{SeipMonograph} (see the Conjecture at p. 33). See also \cite{Boe}, which contains the best
known result on interpolation in general RKHS spaces with the CNP property.
Unfortunately we do not have an answer for the spaces $\mathcal D_d$.

%\bibliographystyle{amsplain} 
%\bibliography{proceedingACOTCA} 

\end{document}